\newtheorem{theorem}{Theorem}[section]
\newtheorem{lemma}[theorem]{Lemma}
\newtheorem{question}{Question}
\newcommand{\Z}{\mathbb{Z}}
\newenvironment{proof}[1][Proof]{\begin{trivlist}
\item[\hskip \labelsep {\bfseries #1}]}{\end{trivlist}}
\newcommand{\qed}{\hfill \ensuremath{\Box}}
\begin{document}

\title{\textbf{Answer to an Isomorphism \\Problem in $\mathbb{Z}^2$}}

\author{\textbf{Matt Noble}\\
Department of Mathematics and Statistics\\
Middle Georgia State University\\
matthew.noble@mga.edu}
\date{}
\maketitle

\begin{abstract} 

For $S \subset \mathbb{R}^n$ and $d > 0$, denote by $G(S, d)$ the graph with vertex set $S$ with any two vertices being adjacent if and only if they are at a Euclidean distance $d$ apart.  Deem such a graph to be ``non-trivial" if $d$ is actually realized as a distance between points of $S$.  In a 2015 article, the author asked if there exist distinct $d_1, d_2$ such that the non-trivial graphs $G(\mathbb{Z}^2, d_1)$ and $G(\mathbb{Z}^2, d_2)$ are isomorphic.  In our current work, we offer a straightforward geometric construction to show that a negative answer holds for this question.\\[5pt]    

\noindent \textbf{Keywords and phrases:} Euclidean distance graph, graph isomorphism, lattice points 
\end{abstract}

\section{Introduction}

\thispagestyle{empty}

In the \textit{Geombinatorics} tradition, this work will feature a blend of graph theory, geometry, and classical number theory.  We will follow the terminology and notation used in most graph theory texts, and for clarification, one could consult \cite{chartrand}.  In our discussion, we will also employ a number of concepts from elementary number theory -- divisibility, the Chinese remainder theorem, representations of integers as sums of squares.  For a refresher, we recommend \cite{leveque}.

For arbitrary graphs $G_1$ and $G_2$, define $G_1$ to be \textit{isomorphic} to $G_2$, and write $G_1 \simeq G_2$, if and only if there exists a bijection $\varphi: V(G_1) \rightarrow V(G_2)$ such that for any $u, v \in V(G_1)$, $u,v$ are adjacent if and only if $\varphi(u), \varphi(v)$ are adjacent.  Let $f$ be any graph parameter.  If it is guaranteed for isomorphic graphs $G_1$ and $G_2$ that $f(G_1) = f(G_2)$, then $f$ is said to be a \textit{graph invariant}.  A common graph invariant that will play a role in our work is $k(G)$, the number of components of $G$.

The notion of the \textit{Euclidean distance graph} has been a central topic of investigation in \textit{Geombinatorics} articles, both past and present, and that will be the case here as well.  Let $S \subset \mathbb{R}^n$ and $d > 0$.  Define $G(S, d)$ to be the graph whose vertices are the points of $S$, with any two vertices being adjacent if and only if they are a Euclidean distance $d$ apart.  Such a graph is deemed \textit{non-trivial} if $d$ is actually realized as a distance between points of $S$, as otherwise, $G(S, d)$ has an empty edge-set, and is not of interest.  Note that $G(S, d_1) \simeq G(S, d_2)$ if and only if there exists an automorphism $\varphi$ of $S$ such that for any points $u,v \in S$, $|u - v| = d_1$ if and only if $|\varphi(u) - \varphi(v)| = d_2$.  The \textit{isomorphism classes} of distance graphs on $S$ are formed by partitioning the interval $(0, \infty)$ such that for any set $P$ of the partition, and $d_1, d_2 \in P$, $G(S, d_1) \simeq G(S, d_2)$, and $P$ is maximal with respect to this property.  As an easy example, note that the space $\mathbb{R}^n$ has only one isomorphism class, as any graph $G(\mathbb{R}^n, d)$ with $d > 0$ is isomorphic to the unit-distance graph $G(\mathbb{R}^n, 1)$ by an obvious scaling argument.  Note also that with this definition, we are playing a little fast and loose with the terminology, as one would normally expect an isomorphism class of graphs to actually consist of a collection of graphs, not a collection of distances.  However, considering the natural correspondence of a graph $G(S, d)$ to the distance $d$, there should be no confusion or sacrifice of mathematical precision in this setup.

As a quick warmup problem, we observe that for distinct $d_1,d_2$, the non-trivial graphs $G(\mathbb{Z}, d_1)$ and $G(\mathbb{Z}, d_2)$ are not isomorphic.  Consider the set of all distance graphs with vertex set $\mathbb{Z}$.  A distance $d$ is realized between distinct points of $\mathbb{Z}$ if and only if $d$ is a positive integer, so the trivial isomorphism class (that is, those distances that produce empty graphs) consists of all $d \in \mathbb{R}^+ \setminus \mathbb{Z}$.  For a positive integer $d$, note that $G(\mathbb{Z}, d)$ consists of $d$ components, with each of those components being isomorphic to $G(\mathbb{Z}, 1)$.  For distinct $d_1, d_2 \in \mathbb{Z}^+$, we therefore have $k(\mathbb{Z}, d_1) \neq k(\mathbb{Z}, d_2)$ and thus $G(\mathbb{Z}, d_1) \not \simeq G(\mathbb{Z}, d_2)$.                      

In a 2015 article \cite{isomorphism}, the author studied isomorphism classes of distance graphs with vertex set $\mathbb{Z}^2$ and posed the following question.

\begin{question} \label{z2question}
For distinct $d_1, d_2 > 0$, both realized as distances between points of $\Z^2$, is it possible that the graphs $G(\Z^2, d_1)$ and $G(\Z^2, d_2)$ are isomorphic?
\end{question}

\noindent Partial results were obtained in \cite{isomorphism}, and we will present them in the next section as a jumping off point for our present work.  These partial conclusions seemed to point to Question \ref{z2question} having a negative answer, but a full resolution was not achieved. 

In Section 2, we again take up the mantle on this problem.  We will offer a relatively straightforward construction showing that indeed, the question does have a negative answer, or, in other words, every non-trivial isomorphism class of Euclidean distance graphs with vertex set $\Z^2$ consists of a single graph.  In Section 3, we conclude with a few thoughts concerning directions for further investigation.  

\section{An Isomorphism Problem in $\mathbb{Z}^2$}

In this section, let $\mathcal{P} \subset \Z^+$ be the set of all primes congruent to 1 modulo 4.  Also throughout this section, we will assume that any $G(\Z^2, d)$ is non-trivial.  In other words, $d = \sqrt{r}$ for some $r \in \Z^+$ with $r$ being representable as a sum of two integer squares.  These $r$ are given by a well-known theorem of Euler, which we give for quick reference as Lemma \ref{twosquareslemma}.

\begin{lemma} \label{twosquareslemma} A positive integer $r$ may be written as $r = a^2 + b^2$ for $a, b \in \mathbb{Z}$ if and only if in the prime factorization of $r$, prime factors congruent to 3 modulo 4 each appear to an even degree.
\end{lemma}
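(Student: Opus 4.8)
The plan is to prove both implications by reducing the question to the behavior of the individual prime factors of $r$ and then recombining via the multiplicativity of the set of sums of two squares. The multiplicative structure comes from the Brahmagupta--Fibonacci identity
\[
(a^2 + b^2)(c^2 + d^2) = (ac - bd)^2 + (ad + bc)^2,
\]
which is most transparently the statement that the norm $N(z) = z\overline{z}$ on the Gaussian integers is multiplicative. Thus a product of sums of two squares is again a sum of two squares, and the whole problem rests on understanding which primes are sums of two squares.

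For the ``if'' direction, suppose every prime factor of $r$ congruent to $3$ modulo $4$ occurs to an even power. Then I would write $r = s^2 m$, where $m$ is a product (with multiplicity) of the prime $2$ together with primes congruent to $1$ modulo $4$. Since $s^2 = s^2 + 0^2$ and $2 = 1^2 + 1^2$ are sums of two squares, the identity above reduces everything to showing that each prime $p \equiv 1 \pmod 4$ is itself a sum of two squares; multiplicativity then assembles these into a representation of $r$. I expect this last fact to be the main obstacle, as the easy cases ($2$, perfect squares, and the reassembly) are immediate, whereas representing $p \equiv 1 \pmod 4$ requires genuine input.

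To represent such a $p$, I would first establish that $-1$ is a quadratic residue modulo $p$, which holds precisely when $p \equiv 1 \pmod 4$; this follows from Euler's criterion, or concretely from Wilson's theorem, which gives $x = ((p-1)/2)!$ with $x^2 \equiv -1 \pmod p$. Fixing such an $x$, I would then invoke a pigeonhole argument in the spirit of Thue's lemma: among the integer pairs $(u,v)$ with $0 \le u, v < \sqrt{p}$ there are $(\lfloor \sqrt{p} \rfloor + 1)^2 > p$ choices, so two distinct pairs agree in the value of $u - xv$ modulo $p$, producing integers $u_0, v_0$, not both zero, with $|u_0|, |v_0| < \sqrt{p}$ and $u_0 \equiv x v_0 \pmod p$. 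Then $u_0^2 + v_0^2 \equiv (x^2 + 1)v_0^2 \equiv 0 \pmod p$ while $0 < u_0^2 + v_0^2 < 2p$, which forces $u_0^2 + v_0^2 = p$.

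For the ``only if'' direction, I would argue by descent. Suppose $r = a^2 + b^2$ and a prime $p \equiv 3 \pmod 4$ divides $r$. If $p \nmid b$, then $b$ is invertible modulo $p$ and $(ab^{-1})^2 \equiv -1 \pmod p$, contradicting the fact that $-1$ is a non-residue when $p \equiv 3 \pmod 4$; hence $p \mid a$ and $p \mid b$. Consequently $p^2 \mid r$, and $r/p^2 = (a/p)^2 + (b/p)^2$ is again a sum of two squares whose $p$-adic valuation is exactly two less than that of $r$. Since this reduction can be applied whenever $p$ still divides the current value, the exponent of $p$ drops by $2$ at each stage and cannot terminate at an odd value; hence $p$ divides $r$ to an even power, as required.
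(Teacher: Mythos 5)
Your proof is correct, and in fact it supplies something the paper deliberately omits: the paper states this lemma without proof, presenting it ``for quick reference'' as a well-known theorem of Euler and pointing the reader to LeVeque's text, so there is no internal argument to compare yours against. What you have written is the standard classical proof, and it is complete. In the ``if'' direction, the reduction via the Brahmagupta--Fibonacci identity (equivalently, multiplicativity of the Gaussian norm) correctly isolates the one nontrivial ingredient, namely that every prime $p \equiv 1 \pmod 4$ is a sum of two squares; your treatment of that ingredient is sound, with the two delicate points handled properly: $(\lfloor\sqrt{p}\rfloor + 1)^2 > p$ holds because $p$, being prime, is not a perfect square, and the bound $0 < u_0^2 + v_0^2 < 2p$ together with $p \mid u_0^2 + v_0^2$ pins the value to exactly $p$. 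In the ``only if'' direction, the descent is likewise correct: since $-1$ is a quadratic nonresidue modulo any $p \equiv 3 \pmod 4$ (Euler's criterion with $(p-1)/2$ odd), $p \mid a^2 + b^2$ forces $p \mid a$ and $p \mid b$, so the $p$-adic valuation of $r$ drops by exactly $2$ under $r \mapsto r/p^2$ and hence must be even. One stylistic remark: for the purposes of this paper such a self-contained proof is unnecessary --- the author treats the lemma as background, as is customary --- but your argument would stand as a valid replacement for the citation.
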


Given some $r$ which is representable as a sum of two integer squares, a characterization of the possible solutions $a,b$ to $r = a^2 + b^2$ is given in many introductory number theory texts (for example, see Chapter 7 of \cite{leveque}).  

\begin{lemma} \label{twosquaresrepslemma} Let $r \in \Z^+$ with prime factorization\\ $r = 2^\gamma {p_1}^{\alpha_1}\cdots {p_m}^{\alpha_m} {q_1}^{2\beta_1} \cdots {q_n}^{2\beta_n}$ where $p_1, \ldots, p_m, q_1, \ldots q_n$ are distinct primes with each $p_i \equiv 1 \pmod 4$ and each $q_j \equiv 3 \pmod 4$.  The following are both true.
\begin{enumerate}
\item[(i)] If $\gamma \leq 1$ and $\beta_1 = \cdots = \beta_n = 0$, then there exist $a,b \in \Z$ such that $r = a^2 + b^2$ and $\gcd(a,b) = 1$.  
\item[(ii)] Let $r = c^2 + d^2$.  Let $g = \gcd(c,d)$ and $h = 2^{\lfloor\frac{\gamma}{2}\rfloor}{q_1}^{\beta_1} \ldots {q_n}^{\beta_n}$.  Then $h | g$.  
\end{enumerate}
\end{lemma}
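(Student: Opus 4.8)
The plan is to pass to the ring of Gaussian integers $\Z[i]$, where a representation $r = a^2 + b^2$ becomes the norm factorization $r = (a+bi)(a-bi) = N(a+bi)$, and where a representation is primitive, i.e. $\gcd(a,b) = 1$, precisely when the Gaussian integer $a+bi$ is divisible by no rational prime. The whole argument rests on the classical splitting types of a rational prime in $\Z[i]$: the prime $2$ ramifies as $2 = -i(1+i)^2$ with $N(1+i) = 2$; each $p_i \equiv 1 \pmod 4$ splits as $p_i = \pi_i\bar\pi_i$ into two non-associate conjugate Gaussian primes with $N(\pi_i) = p_i$; and each $q_j \equiv 3 \pmod 4$ remains prime (inert) in $\Z[i]$, with $N(q_j) = q_j^2$. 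Recording these facts, together with unique factorization in $\Z[i]$, is the only input needed.

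For part (i), assume $\gamma \le 1$ and every $\beta_j = 0$, so $r = 2^\gamma \prod_{i} \pi_i^{\alpha_i}\bar\pi_i^{\alpha_i}$. I would simply build a suitable Gaussian integer by hand: set $z = (1+i)^{\gamma}\prod_{i}\pi_i^{\alpha_i}$ and write $z = a+bi$. Taking norms gives $N(z) = 2^{\gamma}\prod_i p_i^{\alpha_i} = r$, so $r = a^2+b^2$. It remains to verify primitivity, which amounts to checking that no rational prime $\ell$ divides $z$. The prime $2$ fails because $(1+i)^2 \nmid z$ when $\gamma \le 1$; each $p_i$ fails because $z$ contains $\pi_i$ but not its conjugate $\bar\pi_i$, so $p_i = \pi_i\bar\pi_i \nmid z$; and no $q_j$ can divide $z$ since the hypothesis $\beta_j = 0$ removes all such factors from $r$. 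Hence $\gcd(a,b) = 1$.

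For part (ii), write $r = c^2+d^2 = N(z)$ with $z = c+di$, and compare $(1+i)$-adic and $q_j$-adic valuations on both sides of $N(z) = z\bar z$. Since each inert $q_j$ is fixed by conjugation, $v_{q_j}(\bar z) = v_{q_j}(z)$, so $v_{q_j}(r) = 2v_{q_j}(z)$; as $v_{q_j}(r) = 2\beta_j$, this forces $v_{q_j}(z) = \beta_j$, meaning the rational integer $q_j^{\beta_j}$ divides $z$ and therefore divides both $c$ and $d$. For the prime $2$, the conjugate $\overline{1+i} = 1-i$ is an associate of $1+i$, so again $v_{(1+i)}(r) = 2v_{(1+i)}(z)$; since $v_{(1+i)}(r) = 2\gamma$ we get $v_{(1+i)}(z) = \gamma$, and because $2 \sim (1+i)^2$ and $2\lfloor \gamma/2\rfloor \le \gamma$, the rational integer $2^{\lfloor\gamma/2\rfloor}$ divides $z$ and hence divides $c$ and $d$. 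These prime powers are pairwise coprime, so their product $h = 2^{\lfloor\gamma/2\rfloor}q_1^{\beta_1}\cdots q_n^{\beta_n}$ divides $g = \gcd(c,d)$.

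The main obstacle is bookkeeping rather than depth: one must keep the three splitting behaviors straight and, in particular, handle the ramified prime $2$ correctly. The floor in $2^{\lfloor\gamma/2\rfloor}$ is exactly an artifact of ramification -- the $(1+i)$-valuation of $z$ is the full $\gamma$, but since $2 \sim (1+i)^2$ only $\lfloor\gamma/2\rfloor$ honest powers of the rational integer $2$ can be extracted. A fully elementary alternative avoiding $\Z[i]$ is available through the Brahmagupta--Fibonacci identity together with the primitive representability of each $p_i \equiv 1 \pmod 4$, but the primitivity bookkeeping there is messier, so I would favor the Gaussian-integer route; in either case one may also simply cite Chapter 7 of \cite{leveque}.
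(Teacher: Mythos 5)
Your proof is correct, but there is nothing in the paper to compare it against: the paper states Lemma \ref{twosquaresrepslemma} without proof, citing Chapter 7 of \cite{leveque} as a reference for the classical characterization of representations as sums of two squares. Your Gaussian-integer argument is a clean, self-contained derivation of exactly what is cited. Both halves check out: in (i), the element $z = (1+i)^{\gamma}\prod_i \pi_i^{\alpha_i}$ has norm $r$, and your case analysis correctly rules out every rational prime divisor of $z$ --- the only point worth making explicit is that these three cases are exhaustive because any rational prime $\ell$ dividing $z$ satisfies $\ell^2 = N(\ell) \mid N(z) = r$, so $\ell$ must be among $2$, the $p_i$, or the $q_j$. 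In (ii), the valuation computation is right: conjugation fixes each inert $q_j$ and fixes the $(1+i)$-adic valuation (since $1-i$ is an associate of $1+i$), so $v_{q_j}(z) = \beta_j$ and $v_{(1+i)}(z) = \gamma$, from which $q_j^{\beta_j} \mid z$ and $2^{\lfloor \gamma/2 \rfloor} \mid z$ as rational integers, hence both divide $\gcd(c,d)$; your observation that the floor function is precisely an artifact of the ramification $2 \sim (1+i)^2$ is the right way to understand the statement. One could instead give the fully elementary argument you sketch (Brahmagupta--Fibonacci plus primitive representations of each $p_i$, essentially what LeVeque does), but as you note the primitivity bookkeeping there is messier, and your route is the more transparent one.
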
  

In \cite{isomorphism}, a version of Lemma \ref{twosquaresrepslemma} was used to determine $k(\Z^2, d)$ for all $d$.  Moreover, it was shown that if $r_1$ is such that all of its prime factors are elements of $\mathcal{P}$, and $r_2 = r_1(2)^\gamma {q_1}^{2\beta_1} \cdots {q_n}^{2\beta_n}$ with each $q_j \equiv 3 \pmod 4$, then the graph $G(\Z^2, \sqrt{r_2})$ has each of its components isomorphic to the entire graph $G(\Z^2, \sqrt{r_1})$.  This observation was utilized to show that all one needs to do to establish that Question \ref{z2question} has a negative answer is to show that for distinct $r_1, r_2 \in \Z^+$ with $r_1, r_2$ each having prime factorizations consisting solely of elements of $\mathcal{P}$, the corresponding $G_1 = G(\Z^2, \sqrt{r_1})$ and $G_2 = G(\Z^2, \sqrt{r_2})$ are not isomorphic.  Furthermore, this was done successfully in \cite{isomorphism} in the case of $r_1, r_2$ both prime, through analysis of a particular graph invariant, the number of closed walks of a specified length containing a vertex $v \in V(G_1)$ and its image $\varphi(v) \in V(G_2)$ under an assumed isomorphism $\varphi: G_1 \rightarrow G_2$.  Unfortunately, this line of proof appeared very difficult to navigate when $r_1$ or $r_2$ is composite, as in that situation, the enumeration of closed walks does not appear feasible.

We now consider a different graph invariant.  Let $G_1, G_2$ be arbitrary graphs, and suppose $\varphi: G_1 \rightarrow G_2$ is an isomorphism.  Let $u,v \in V(G_1)$, and for positive integer $l$, define $f_l(u,v)$ as the number of distinct paths of length $l$ beginning at $u$ and terminating at $v$.  Clearly, $f_l(u,v) = f_l(\varphi(u),\varphi(v))$.  Now denoting $G_1 = G(\Z^2, \sqrt{r_1})$ and $G_2 = G(\Z^2, \sqrt{r_2})$ with $r_1, r_2$ each having prime factorizations consisting solely of elements of $\mathcal{P}$, our plan will be to assume the existence of an isomorphism $\varphi: G_1 \rightarrow G_2$, and in the following lemmas, develop a few conditions to which $\varphi$ must adhere.  We then combine these observations in Theorem \ref{z2result} to obtain a contradiction, thus showing Question \ref{z2question} has a negative answer.  Throughout, we may without loss of generality freely assume that $\varphi$ fixes the origin and that $r_1 > r_2$.

\begin{lemma} \label{collinearlemma} Let $C_1, C_2$ be circles centered at the origin and having radii $\sqrt{r_1}, \sqrt{r_2}$, respectively.  Let point $p \in C_1 \cap \Z^2$.  There exists a point $q \in C_1 \cap \Z^2$ such that for all $n \in \Z^+$, $\varphi(np) = nq$.  
\end{lemma}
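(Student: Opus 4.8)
The plan is to set $q := \varphi(p)$ and prove $\varphi(np) = nq$ for all $n \in \Z^+$ by induction, using the path-counting invariant $f_2$ together with one elementary fact about common neighbors in $G(\Z^2,\sqrt{r})$. Since $p$ is adjacent to the origin in $G_1$ and $\varphi$ fixes the origin, $q = \varphi(p)$ is a lattice point adjacent to the origin in $G_2$, so $|q| = \sqrt{r_2}$. This settles the base cases $\varphi(0) = 0 = 0\cdot q$ and $\varphi(p) = q$, and identifies the point $q$ that the statement asserts to exist.

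The crux is the following observation. For lattice points $u,w$ and radius $\sqrt{r}$, a point $x$ is a common neighbor of $u$ and $w$ exactly when $|x-u| = |x-w| = \sqrt{r}$; subtracting the two equations forces $x$ onto the perpendicular bisector of $uw$, so there are at most two common neighbors. If $v$ is one of them, a direct check shows $u+w-v$ is also one (it has the same distances to $u$ and to $w$), and since both points lie on the radius-$\sqrt{r}$ circle about $u$ intersected with a line, the common-neighbor set is exactly $\{v,\ u+w-v\}$. Two consequences matter: because $u,w,v \in \Z^2$, the companion point $u+w-v$ is automatically a lattice point; and the two points coincide precisely when $v = \tfrac12(u+w)$. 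Hence two lattice points that have a common neighbor have a \emph{unique} one if and only if that neighbor is their (integral) midpoint. In terms of the invariant, $f_2(u,w) = 1$ forces the lone common neighbor to equal $\tfrac12(u+w)$.

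I would then propagate this along the ray using pairs at graph-distance two. In $G_1$, the circles of radius $\sqrt{r_1}$ centered at $(n-1)p$ and $(n+1)p$ have centers a distance $2\sqrt{r_1}$ apart, so they are externally tangent at the midpoint $np$; thus $np$ is their unique common neighbor and $f_2\big((n-1)p,(n+1)p\big) = 1$. Assuming inductively $\varphi((n-1)p) = (n-1)q$ and $\varphi(np) = nq$, invariance of $f_2$ gives $f_2\big((n-1)q,\ \varphi((n+1)p)\big) = 1$. Moreover $nq$ is a genuine common neighbor of $(n-1)q$ and $\varphi((n+1)p)$: it is adjacent to $(n-1)q$ since $|q| = \sqrt{r_2}$, and adjacent to $\varphi((n+1)p)$ since $np \sim (n+1)p$. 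The midpoint characterization then yields $nq = \tfrac12\big((n-1)q + \varphi((n+1)p)\big)$, so $\varphi((n+1)p) = (n+1)q$, closing the induction. The initial step producing $\varphi(2p) = 2q$ is identical, using the tangency of the radius-$\sqrt{r_1}$ circles about $0$ and $2p$.

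The only step needing care is the geometric observation, and in particular the point that the companion common neighbor $u+w-v$ is again a lattice point. This is exactly what rules out the degenerate scenario in which two equal-radius circles cross but only one of their two intersection points is integral; it is what upgrades "unique common neighbor" to "midpoint," and thereby to the collinear, equally spaced conclusion. Everything after that is a routine transport of this single fact along the ray via $f_2$.
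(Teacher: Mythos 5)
Your proof is correct, and it takes a genuinely different route from the paper's. The paper also inducts along the ray, but its invariant is global: it shows $f_n\big((0,0),np\big)=1$ and, crucially, that any lattice point $\alpha$ off the circle of radius $n\sqrt{r_1}$ has $f_n\big((0,0),\alpha\big)\neq 1$ --- proved by perturbing a length-$n$ path at a ``slack'' pair of steps with $|v_i - v_{i+2}| < 2\sqrt{r_1}$, replacing the middle vertex by another lattice point. This pins $\varphi(kp)$ onto the circle of radius $k\sqrt{r_2}$ centered at the origin, and the internal tangency of that circle with the circle of radius $\sqrt{r_2}$ about $(k-1)q$ finishes the induction step. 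You instead work purely locally with $f_2$: the external tangency giving $f_2\big((n-1)p,(n+1)p\big)=1$, transported by $\varphi$, plus your midpoint characterization of unique common neighbors. Notably, the lattice-reflection fact you isolate --- if $v$ is a common neighbor of $u,w\in\Z^2$ then so is the lattice point $u+w-v$, so uniqueness forces $v=\tfrac12(u+w)$ --- is exactly the kernel hidden inside the paper's perturbation argument (its alternative vertex $w$ is this reflection), but you deploy it at the two-step level and thereby avoid analyzing length-$n$ paths entirely, needing only a two-term induction with base cases $\varphi(0)=0$ and $\varphi(p)=q$. Your version is more elementary, and it also implicitly repairs a typo in the statement: the conclusion should read $q \in C_2 \cap \Z^2$, and your $q=\varphi(p)$ indeed has length $\sqrt{r_2}$. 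What the paper's heavier lemma buys is information you do not need here --- that $f_n=1$ localizes the image to the circle of radius $n\sqrt{r_2}$ --- whereas your argument extracts the collinear, equally spaced conclusion directly from tangency and integrality.
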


\begin{proof} Designate $u = (0,0)$.  Let $n \in \Z^+$, and note that $f_n(u, np) = 1$.  Denote by $C$ the circle centered at $u$ and having radius $n\sqrt{r_1}$, and note that for any point $\alpha \in \Z^2$ with $\alpha \not \in C$, $f_n(u, \alpha) \neq 1$.  This is straightforward to see, as if $\alpha$ lies outside of $C$, then $f_n(u, \alpha) = 0$.  If $\alpha$ lies inside $C$, and there exist $\alpha = v_0, v_1, \ldots, v_n = u$ constituting the vertices of a path of length $n$ in $G_1$, then there exists some $i \in \{0, \ldots, n-2\}$ with $|v_i - v_{i+2}| < 2\sqrt{r_1}$, which implies the existence of a lattice point $w \neq v_{i+1}$ such that $v_0, \ldots, v_i, w, v_{i+2}, \ldots, v_n$ also form a path of length $n$ in $G_1$.  

We now proceed by way of induction.  Note that the lemma holds in the case of $n = 1$ simply by the definition of $\varphi$ being an isomorphism.  Assume it holds for all $n \leq k - 1$.  In $G_1$, we have $f_k(u,kp) = 1$, and so in $G_2$, $f_k(u,\varphi(kp)) = 1$ as well.  Thus $\varphi(kp)$ lies on a circle $C'$ centered at $u$ and having radius $k\sqrt{r_2}$.  However, $\varphi(kp)$ must also be at distance $\sqrt{r_2}$ from $(k-1)q$.  A circle $C''$ of radius $\sqrt{r_2}$ and centered at $(k-1)q$ intersects $C'$ at exactly the point $kq$.  This completes the induction step and establishes proof of the lemma.\qed
\end{proof}

\begin{lemma} \label{anglelemma} There exist vectors $v_\alpha, v_\beta \in \Z^2$, each of length $\sqrt{r_1}$, such that the angle $\theta$ between $v_\alpha, v_\beta$ is not realized between any pair of vectors $w_\alpha, w_\beta \in \Z^2$, each of length $\sqrt{r_2}$.
\end{lemma}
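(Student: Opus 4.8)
My plan is to recast the problem in terms of Gaussian integers, where the set of ``realizable angles'' at a given radius becomes a multiplicative object governed by unique factorization in $\mathbb{Z}[i]$. First I would identify each lattice vector $(a,b)$ of length $\sqrt{r}$ with the Gaussian integer $a+bi$ of norm $r$. If $z_1,z_2$ are two such vectors, then the (unsigned) angle $\theta$ between them satisfies $\cos\theta = \frac{\mathrm{Re}(z_1\overline{z_2})}{r}$, and $z_1\overline{z_2}$ is a Gaussian integer of norm $r^2$. Thus every angle occurring at radius $\sqrt{r}$ is encoded by the unit-modulus direction $d = z_1\overline{z_2}/r$ of some norm-$r^2$ Gaussian integer; let $\Theta(r)$ denote the collection of all such directions. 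Because $\Theta(r)$ is closed under complex conjugation and because $\cos\theta$ determines $\theta \in [0,\pi]$, the angle attached to a direction $e^{i\psi}\in\Theta(r_1)$ is realized at radius $\sqrt{r_2}$ if and only if $e^{i\psi}\in\Theta(r_2)$. Hence the lemma reduces to exhibiting a single direction lying in $\Theta(r_1)\setminus\Theta(r_2)$.

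Next I would compute $\Theta(r)$ explicitly. Writing each prime $p\in\mathcal{P}$ as $p=\pi_p\overline{\pi_p}$ in $\mathbb{Z}[i]$ and setting $\zeta_p=\pi_p/\overline{\pi_p}$ (which has modulus $1$, since $|\pi_p|=|\overline{\pi_p}|$), a short factorization bookkeeping shows that for $r=\prod_p p^{\alpha_p}$ one has
\[
\Theta(r)=\Big\{\, u\textstyle\prod_p \zeta_p^{f_p} \ :\ u\in\{1,i,-1,-i\},\ f_p\in\mathbb{Z},\ |f_p|\le \alpha_p \,\Big\}.
\]
The bound $|f_p|\le\alpha_p$ reflects that a norm-$r^2$ Gaussian integer uses $\pi_p$ and $\overline{\pi_p}$ to a combined degree $2\alpha_p$; conversely every such product is attained, since every Gaussian integer of norm $r^2$ can be written as $z_1\overline{z_2}$ with $|z_1|=|z_2|=\sqrt{r}$ (I would verify this by distributing the $\pi_p$-exponents between $z_1$ and $z_2$, but will omit the routine coordinate computation).

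Finally comes the crux of the argument, and the step I expect to be the main obstacle: the numbers $\zeta_p$ are multiplicatively independent. Concretely, if $i^{k}\prod_p \zeta_p^{f_p}=1$ with only finitely many nonzero $f_p$, then clearing denominators gives $\prod_p \pi_p^{f_p}\,\overline{\pi_p}^{-f_p}=i^{-k}$, and comparing the exponent of each Gaussian prime on the two sides via unique factorization in $\mathbb{Z}[i]$ forces every $f_p=0$ and $k\equiv 0 \pmod 4$. Consequently the representation of an element of the group generated by $i$ and the $\zeta_p$ in the form $u\prod_p \zeta_p^{f_p}$ is unique. Now, since $r_1>r_2$ and both have all prime factors in $\mathcal{P}$, we cannot have $r_1\mid r_2$, so there is a prime $p$ with $v_p(r_1)>v_p(r_2)$. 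The direction $\zeta_p^{\,v_p(r_1)}$ lies in $\Theta(r_1)$, but by the uniqueness just established it cannot lie in $\Theta(r_2)$, where the exponent of $\zeta_p$ is capped at $v_p(r_2)<v_p(r_1)$. Choosing $v_\alpha,v_\beta$ to be any pair of length-$\sqrt{r_1}$ vectors with $v_\alpha\overline{v_\beta}/r_1=\zeta_p^{\,v_p(r_1)}$ then produces an angle realized at radius $\sqrt{r_1}$ but between no pair of length-$\sqrt{r_2}$ vectors, which is exactly the claim. The only genuinely nontrivial ingredient is the multiplicative independence of the phases $\zeta_p$, i.e.\ the input from unique factorization in $\mathbb{Z}[i]$.
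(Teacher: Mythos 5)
Your proof is correct, but it takes a genuinely different route from the paper's. The paper argues entirely with rational arithmetic: invoking Lemma \ref{twosquaresrepslemma}, it picks $a, b \in \Z^+$ with $a^2 + b^2 = r_1$ and $\gcd(a,b) = 1$, takes $v_\alpha = \langle a,b \rangle$ and $v_\beta = \langle b,a \rangle$ so that $\cos(\theta) = \frac{2ab}{r_1}$, and observes that equality with any $\cos(\theta') = \frac{w_\alpha \cdot w_\beta}{r_2}$ would force $2abr_2 = r_1(w_\alpha \cdot w_\beta)$; choosing a prime $p \in \mathcal{P}$ and $n$ with $p^n \mid r_1$ but $p^n \nmid r_2$ (possible since $r_1 > r_2$ rules out $r_1 \mid r_2$), and noting $p \nmid 2ab$ because $\gcd(a,b) = 1$ and $p$ is odd, the two sides of that equation have different $p$-adic valuations --- a three-line contradiction. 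You instead classify the entire spectrum $\Theta(r)$ of realizable angle-directions via factorization in $\mathbb{Z}[i]$ and prove multiplicative independence of the phases $\zeta_p = \pi_p/\overline{\pi_p}$; both your computation of $\Theta(r)$ and the independence argument are sound. The one fact you should state explicitly is that $\pi_p$ and $\overline{\pi_p}$ are \emph{non-associate} Gaussian primes, which is precisely where $p \equiv 1 \pmod 4$ enters: for $p = 2$ one has $\zeta_2 = \frac{1+i}{1-i} = i$ and independence fails, so the restriction to $\mathcal{P}$ is doing real work in your exponent comparison. What your approach buys is strictly more information: a complete description of which angles occur at radius $\sqrt{r}$, hence the stronger conclusion that $\Theta(r_1) \not\subseteq \Theta(r_2)$ whenever $r_1 \nmid r_2$, together with a structural explanation of the phenomenon. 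What the paper's approach buys is brevity and elementarity --- it needs only the dot-product formula and divisibility in $\Z$, with no appeal to unique factorization in $\mathbb{Z}[i]$. (Incidentally, the paper's witness pair is a special case of your framework: with $v_\alpha = a + bi$ and $v_\beta = b + ai = i\overline{v_\alpha}$ one gets $v_\alpha\overline{v_\beta} = -i(a+bi)^2$, so the two constructions are close cousins.)
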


\begin{proof} Since by assumption $r_1 > r_2$, for some prime $p \in \mathcal{P}$ and positive integer $n$, it is the case that $p^n$ divides $r_1$ but does not divide $r_2$.  By Lemma \ref{twosquaresrepslemma}, there exist $a,b \in \Z^+$ such that $a^2 + b^2 = r_1$ and $\gcd(a,b) = 1$.  Note here that $p$ divides neither $a$ nor $b$.  

Let $v_\alpha = \langle a,b \rangle$ and $v_\beta = \langle b,a \rangle$.  The angle $\theta$ is then given by $\cos(\theta) = \frac{v_\alpha \cdot v_\beta}{|v_\alpha||v_\beta|} = \frac{2ab}{r_1}$.  Similarly, letting $w_\alpha, w_\beta \in \Z^2$ be vectors of length $\sqrt{r_2}$ and $\theta'$ the angle between them, we have $\cos(\theta') = \frac{w_\alpha \cdot w_\beta}{r_2}$.  Setting $\cos^{-1}(\frac{2ab}{r_1}) = \cos^{-1}(\frac{w_\alpha \cdot w_\beta}{r_2})$ gives a contradiction as it implies $2abr_2 = r_1(w_\alpha \cdot w_\beta)$ where $p^n$ divides $r_1(w_\alpha \cdot w_\beta)$ but $p^n$ does not divide $2abr_2$.\qed
\end{proof}

\begin{lemma} \label{10lemma} Let $r \in \Z^+$ where any prime factor of $r$ is in $\mathcal{P}$.  Let $u,v \in \Z^2$ where $|u - v| < \sqrt{r}$.  There exists a path in $G(\Z^2, \sqrt{r})$ beginning at $u$ and terminating at $v$ and having length $n < 8r^\frac32$.
\end{lemma}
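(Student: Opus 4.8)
The plan is to exhibit an explicit short walk from $u$ to $v$ built out of the distance-$\sqrt{r}$ steps, bound its length, and then pass to a path of no greater length. First, since every prime factor of $r$ lies in $\mathcal{P}$, the integer $r$ satisfies the hypotheses of Lemma~\ref{twosquaresrepslemma}(i) (we have $\gamma = 0 \leq 1$ and no factors $q_j$), so there exist $a, b \in \Z^+$ with $a^2 + b^2 = r$ and $\gcd(a,b) = 1$; as $r$ is odd, exactly one of $a, b$ is even, whence $a \neq b$ and both are positive. The steps available in $G(\Z^2, \sqrt{r})$ are the eight vectors $\langle \pm a, \pm b\rangle$ and $\langle \pm b, \pm a\rangle$, and I will show that suitable concatenations of these realize any displacement $v - u$ with $|v - u| < \sqrt{r}$ in few steps.

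The core of the construction is to manufacture short walks realizing the small displacements $\langle \pm 2, 0\rangle$ and $\langle 0, \pm 2\rangle$. Taking consecutive steps $\langle a,b\rangle$ then $\langle a,-b\rangle$ produces the net displacement $\langle 2a, 0\rangle$ in two steps, and analogously I obtain two-step walks for $\langle 2b, 0\rangle$, $\langle 0, 2a\rangle$, $\langle 0, 2b\rangle$, together with their negatives (by reversing the walks). Since $\gcd(a,b) = 1$, Bezout's identity furnishes integers $p, q$ with $ap + bq = 1$ and $|p| \leq b$, $|q| \leq a$; concatenating $|p|$ copies of the $\langle 2a, 0\rangle$-walk with $|q|$ copies of the $\langle 2b, 0\rangle$-walk (using reversed walks where a coefficient is negative) realizes $\langle 2, 0\rangle$ in at most $2(|p| + |q|) \leq 2(a+b)$ steps. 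The same construction in the vertical direction realizes $\langle 0, 2\rangle$ in at most $2(a+b)$ steps.

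It remains to assemble these pieces. Write $d = v - u = \langle d_1, d_2\rangle$, so $d_1^2 + d_2^2 < r$ and hence $|d_1|, |d_2| < \sqrt{r}$. Because exactly one of $a, b$ is odd, one of the single steps $\langle a,b\rangle$, $\langle b, a\rangle$ has parity $(1,0)$ and the other parity $(0,1)$; prepending $\epsilon_1, \epsilon_2 \in \{0,1\}$ such steps, chosen to match the parities of $d_1$ and $d_2$, reduces the remaining displacement to an even-even vector $\langle 2e_1, 2e_2\rangle$. This residual is then covered by $e_1$ signed copies of the $\langle 2, 0\rangle$-walk and $e_2$ signed copies of the $\langle 0, 2\rangle$-walk. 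Using $a + b \leq \sqrt{2r}$ and $|e_i| \leq \tfrac12(|d_i| + a + b) \leq \tfrac12(\sqrt{r} + \sqrt{2r})$, the total number of steps is bounded by $2 + 2(a+b)(|e_1| + |e_2|)$, which is of order $r$ and therefore lies comfortably below $8r^{\frac32}$ for all admissible $r$ (the smallest nontrivial case being $r = 5$, which, along with $r=1$ where $u = v$ is forced, I would check directly). Finally, since any $u$--$v$ walk contains a $u$--$v$ path of length at most that of the walk, the required path of length $n < 8r^{\frac32}$ exists.

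The main obstacle is not the arithmetic of the bound -- the threshold $8r^{\frac32}$ is very loose, so even crude estimates suffice -- but rather the clean production of the primitive moves $\langle \pm 2, 0\rangle$ and $\langle 0, \pm 2\rangle$. This is precisely where the hypotheses are indispensable: the coprimality $\gcd(a,b) = 1$, guaranteed by the restriction on the prime factors of $r$, is what makes the horizontal steps $\langle 2a, 0\rangle, \langle 2b, 0\rangle$ span every even horizontal displacement via Bezout, while the oddness of $r$ is exactly what allows a single step to flip the parity of either coordinate independently.
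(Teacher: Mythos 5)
Your proof is correct, and at its core it runs on the same engine as the paper's: a primitive representation $a^2+b^2=r$ with $\gcd(a,b)=1$ from Lemma \ref{twosquaresrepslemma}(i), Bezout coefficients of size $O(\sqrt{r})$, two-step ``doubled'' moves such as $\langle a,b\rangle + \langle a,-b\rangle = \langle 2a,0\rangle$, and a single odd step to repair parity. Indeed, the paper's displayed vector sum is exactly this mechanism specialized to the target $\langle 0,1\rangle$: its first bracketed block is $\frac{a}{2}$ copies of a Bezout combination netting $\langle -2,0\rangle$, its second is $\frac{b-1}{2}$ copies of one netting $\langle 0,-2\rangle$, and the lone $\langle a,b\rangle$ fixes parity. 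Where you genuinely differ is in the assembly, and your version is more economical: the paper builds a unit-vector gadget costing $(s+t)(a+b-1)+1 = O(r)$ steps and repeats it $|x|+|y| = O(\sqrt{r})$ times, for $O(r^{3/2})$ total, whereas you pay the Bezout cost only once per coordinate --- one parity-fixing step per odd coordinate of $v-u$, then $O(\sqrt{r})$ copies of an $O(\sqrt{r})$-step $\langle 2,0\rangle$ or $\langle 0,2\rangle$ gadget --- for a total of $O(r)$. This partially realizes the paper's own remark that the order of the $8r^{\frac32}$ bound ``can almost certainly be improved.'' Two further points to your credit: you flag the degenerate case $r=1$ (where $u=v$ is forced and no positive primitive representation exists), and you make explicit the walk-to-path reduction that the paper leaves implicit. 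The only step worth tightening is the normalization $|p|\le b$, $|q|\le a$ in Bezout's identity; choosing the representative with $0 \le p < b$ gives $|q| < a$ by a one-line estimate, and in any event the ample slack in $8r^{\frac32}$ absorbs any such additive constants, as your own numerical check at $r=5$ confirms.
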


\begin{proof} By Lemma \ref{twosquaresrepslemma}, let $a,b \in \Z^+$ where $a^2 + b^2 = r$ and $\gcd(a,b) = 1$.  Since $r$ is odd, one of $a,b$ is odd and the other even, so without loss of generality, assume $a$ even.  By the Chinese remainder theorem, there exist $s, t \in \Z^+$ such that $sa - tb = -1$.  Note here that $s+t < a+b$.  As observed in Theorem 3.2 of \cite{isomorphism}, the following sum of vectors yields $\langle 0, 1 \rangle$. 

\begin{center}
$\langle a, b \rangle + \frac{a}{2}\left[s \langle a, b \rangle + s \langle a, -b \rangle + t \langle -b, a \rangle + t \langle -b, -a \rangle \right] + \frac{b - 1}{2} \left[s \langle b, a \rangle + s \langle -b, a \rangle + t \langle a, -b \rangle + t \langle -a, -b \rangle \right]$   
\end{center}

This sum adds together $(s+t)(a+b-1) + 1$ vectors in total, and by negating or permuting entries of those vectors used, we may also form any of $\langle 1, 0 \rangle$, $\langle -1, 0 \rangle$, or $\langle 0, -1 \rangle$ by summing a similar number of vectors, each of length $\sqrt{r}$.  Let $\langle x,y \rangle$ be the vector with initial point $u$ and terminal point $v$.  We have each of $|x|,|y|$ less than $\sqrt{r}$ (as well as each of $a,b$ less than $\sqrt{r}$), so the desired path can be found in $G(\Z^2, \sqrt{r})$ of length $(s+t)(a+b-1)(|x| + |y|)$ which is in turn less than $8r^\frac32$.\qed
\end{proof}

We remark in passing that the order of the ``$8r^\frac32$" bound in the statement of Lemma \ref{10lemma} can almost certainly be improved.  Perhaps that may even merit investigation in some future research program.  However, the present form of Lemma \ref{10lemma} is fine for our its intended use in the proof of Theorem \ref{z2result}.

\begin{theorem}  \label{z2result} Let distinct $r_1, r_2 \in \Z^+$, each having prime factorizations consisting solely of elements of $\mathcal{P}$.  Then $G(\Z^2, \sqrt{r_1}) \not \simeq G(\Z^2, \sqrt{r_2})$.
\end{theorem}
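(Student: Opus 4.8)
The plan is to assume an isomorphism $\varphi: G_1 \to G_2$ with $\varphi$ fixing the origin and $r_1 > r_2$, and to show that $\varphi$ must in fact be the restriction to $\Z^2$ of a linear similarity. Once that is established, $\varphi$ preserves angles between vectors, and applying it to the vectors $v_\alpha, v_\beta$ of Lemma \ref{anglelemma} produces length-$\sqrt{r_2}$ lattice vectors $\varphi(v_\alpha), \varphi(v_\beta)$ that enclose the forbidden angle $\theta$, which is the desired contradiction.

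The first and main step is to prove linearity. The key local observation is that two distinct circles of equal radius meet in at most two points. Hence if $v, v'$ are length-$\sqrt{r_1}$ lattice vectors with $v \neq \pm v'$, the four points $0, v, v', v+v'$ form a rhombus (a $4$-cycle) in $G_1$, and its image must be a $4$-cycle in $G_2$; since $\varphi(0)=0$ and $\varphi(v)+\varphi(v')$ are the only two common neighbors of $\varphi(v)$ and $\varphi(v')$, I obtain $\varphi(v+v') = \varphi(v)+\varphi(v')$. Applying the same two-circle argument to a translated rhombus $x, x+v, x+v', x+v+v'$ shows that the increment $\varphi(x+v)-\varphi(x)$ is unchanged when $x$ is shifted by any length-$\sqrt{r_1}$ vector $v' \neq \pm v$. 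Because $\gcd(a,b)=1$ in the primitive representation $r_1 = a^2+b^2$ (Lemma \ref{twosquaresrepslemma}), the length-$\sqrt{r_1}$ vectors generate $\Z^2$, and $G_1$ is connected (Lemma \ref{10lemma}); propagating the increment-invariance along paths then forces $\varphi(x+v)-\varphi(x)$ to equal the constant $\varphi(v)$ for every $x$. Iterating over a generating set yields full additivity $\varphi(x+y)=\varphi(x)+\varphi(y)$, so $\varphi$ is an integer linear map $L$ (consistent with the homogeneity $\varphi(nv)=n\varphi(v)$ already guaranteed by Lemma \ref{collinearlemma}), and by construction $L$ carries $C_1 \cap \Z^2$ into $C_2 \cap \Z^2$.

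Next I would argue that $L$ is a similarity. Writing $M = L^\top L - \tfrac{r_2}{r_1} I$, the condition $|Lx| = \sqrt{r_2}$ for $x \in C_1 \cap \Z^2$ says that the quadratic form $x^\top M x$ vanishes at every lattice point of $C_1$. Since every prime factor of $r_1$ lies in $\mathcal{P}$, the circle $C_1$ carries at least $8$ lattice points occupying at least $4$ distinct directions; parametrizing $C_1$ by angle turns $x^\top M x$ into an expression of the form $A + B\cos 2t + C\sin 2t$, which can vanish in more than two directions only if $A=B=C=0$. Thus $M=0$, that is $L^\top L = \tfrac{r_2}{r_1} I$, so $L$ is a scaled rotation or reflection and therefore preserves angles.

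Finally, applying this angle-preservation to the vectors $v_\alpha, v_\beta$ supplied by Lemma \ref{anglelemma}, the images $\varphi(v_\alpha), \varphi(v_\beta)$ are length-$\sqrt{r_2}$ lattice vectors enclosing the same angle $\theta$, contradicting the conclusion of that lemma and completing the proof. I expect the genuine difficulty to be concentrated entirely in the linearity step: one must rule out the degenerate configurations in which $\varphi(v)$ and $\varphi(v')$ are antipodal (so that the two common neighbors collapse to one) and must verify that the increment-invariance propagates consistently over all of $\Z^2$ with no global obstruction. The similarity and angle steps are then short, essentially linear-algebraic, consequences.
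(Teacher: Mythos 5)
Your proposal is correct, but it takes a genuinely different route from the paper. The paper never attempts to show $\varphi$ is linear: it uses Lemma \ref{collinearlemma} only on the two rays through the origin spanned by $v_\alpha, v_\beta$, and then derives a contradiction by coarse geometry — since the image angle $\theta_2$ differs from $\theta_1$, the gap $|mw_\alpha - nw_\beta| - |mv_\alpha - nv_\beta|$ grows without bound, while Lemma \ref{10lemma} guarantees a path of bounded length from a point near the ray of $v_\beta$ to $nv_\beta$ in $G_1$, so $f_z(mv_\alpha, nv_\beta) > 0$ but $f_z(mw_\alpha, nw_\beta) = 0$ for large $m$. You instead prove full rigidity: the common-neighbor count of $x+v$ and $x+v'$ is exactly two (the circles of radius $\sqrt{r_1}$ about them meet precisely in $x$ and $x+v+v'$), the isomorphism preserves that count, and since the two intersection points of equal-radius circles centered at $P,Q$ sum to $P+Q$, you get $\varphi(x)+\varphi(x+v+v') = \varphi(x+v)+\varphi(x+v')$ with no pairing ambiguity — this also disposes of your antipodal worry, since tangent image circles would leave only one common neighbor, contradicting the preserved count of two. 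The one detail you flag that genuinely needs checking is the propagation step: the increment invariance is only available for shifts by length-$\sqrt{r_1}$ vectors $v' \neq \pm v$, and generation of $\Z^2$ by the full set of such vectors does not formally give generation by the punctured set. It does hold, though: with $v = \langle a,b\rangle$, the vectors $\langle a,-b\rangle, \langle b,a\rangle, \langle b,-a\rangle$ have pairwise $2\times 2$ minors $a^2+b^2$, $b^2-a^2$, $2a^2$, $2b^2$ (after taking sums and differences), whose gcd is $1$ because $\gcd(a,b)=1$ and $r_1$ is odd, so the punctured set already generates $\Z^2$. Your similarity step is likewise sound: since $r_1 > r_2 \geq 1$ forces $r_1 > 1$ and all prime factors of $r_1$ lie in $\mathcal{P}$, the circle $C_1$ carries at least eight lattice points in at least four directions, while $A + B\cos 2t + C\sin 2t$ has at most two zeros per period $\pi$ unless identically zero, giving $L^\top L = \frac{r_2}{r_1} I$. (At that point you could even skip Lemma \ref{anglelemma}: a bijective linear map of $\Z^2$ has $|\det L| = 1$, yet $\det(L^\top L) = (r_2/r_1)^2 < 1$, an immediate contradiction.) What each approach buys: yours is more self-contained — it subsumes Lemma \ref{collinearlemma}, needs Lemma \ref{10lemma} only for connectivity/generation, and yields the stronger structural fact that any hypothetical isomorphism would have to be a lattice similarity; the paper's path-counting argument avoids all rigidity bookkeeping (degenerate configurations, generating-set verification) at the cost of the support-line construction and the explicit length bound $8r^{\frac32}$.
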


\begin{proof} As in previous discussion, let $G_1 = G(\Z^2, \sqrt{r_1})$ and \\$G_2 = G(\Z^2, \sqrt{r_2})$ with $r_1 > r_2$, and assume the existence of an isomorphism $\varphi: G_1 \rightarrow G_2$ that fixes the origin.  Regarding Lemma \ref{anglelemma}, let $v_\alpha, v_\beta \in \Z^2$ be vectors of length $\sqrt{r_1}$ where the angle $\theta_1$ between $v_\alpha, v_\beta$ is not realized between any pair of vectors in $\Z^2$ having length $\sqrt{r_2}$.

Consider the two sets of collinear points $\{v_\alpha, 2v_\alpha, 3v_\alpha, \ldots\}$ and\\ $\{v_\beta, 2v_\beta, 3v_\beta, \ldots\}$.  By Lemma \ref{collinearlemma}, for some vectors $w_\alpha, w_\beta$, each of length $\sqrt{r_2}$, $\varphi(iv_\alpha) = iw_\alpha$ and $\varphi(iv_\beta) = iw_\beta$ for all $i \in \Z^+$.  Let $\theta_2$ be the angle between vectors $w_\alpha, w_\beta$.  We have $\theta_1 \neq \theta_2$, and we will assume $\theta_1 < \theta_2$, however a similar argument would hold in the case of $\theta_2 < \theta_1$.  Note that, since $\theta_1 < \theta_2$, by taking larger and larger $m,n$, we can make the difference $|mw_\alpha - nw_\beta| - |mv_\alpha - nv_\beta|$ arbitrarily large as well. 

Let $\ell$ be the ray containing each of $(0,0), v_\beta, 2v_\beta, 3v_\beta, \ldots$, and for each $i \in \Z^+$, let $C_i$ be the circle centered at $iv_\beta$ and having radius $\sqrt{r_1}$.  Form support lines $\ell_1, \ell_2$ by translating $\ell$ by vectors $t, -t$ where $t$ is perpendicular to $\ell$ and has length $\frac{\sqrt{3r_1}}{2}$.  For a visual reference, see Figure 1.  Let $v \in \Z^2$ be a vector of length $\sqrt{r_1}$ where $v \not \in \{v_\alpha, v_\beta\}$, so that we may start at $mv_\alpha$ for some sufficiently large $m$ and form points $mv_\alpha + v, mv_\alpha + 2v, mv_\alpha + 3v, \ldots$ such that the ray containing each of $mv_\alpha, mv_\alpha + v, mv_\alpha + 2v, mv_\alpha + 3v, \ldots$ intersects $\ell$.  Such a vector $v$ is guaranteed to exist, and in fact, if one uses $v_\alpha = \langle a,b \rangle, v_\beta = \langle b,a \rangle$ with $a^2 + b^2 = r_1$ and $a,b \in \Z^+$ (as described in the proof of Lemma \ref{anglelemma}), $v = \langle a, -b \rangle$ is suitable.

\begin{figure}
    \centering
    \includegraphics[scale=.8]{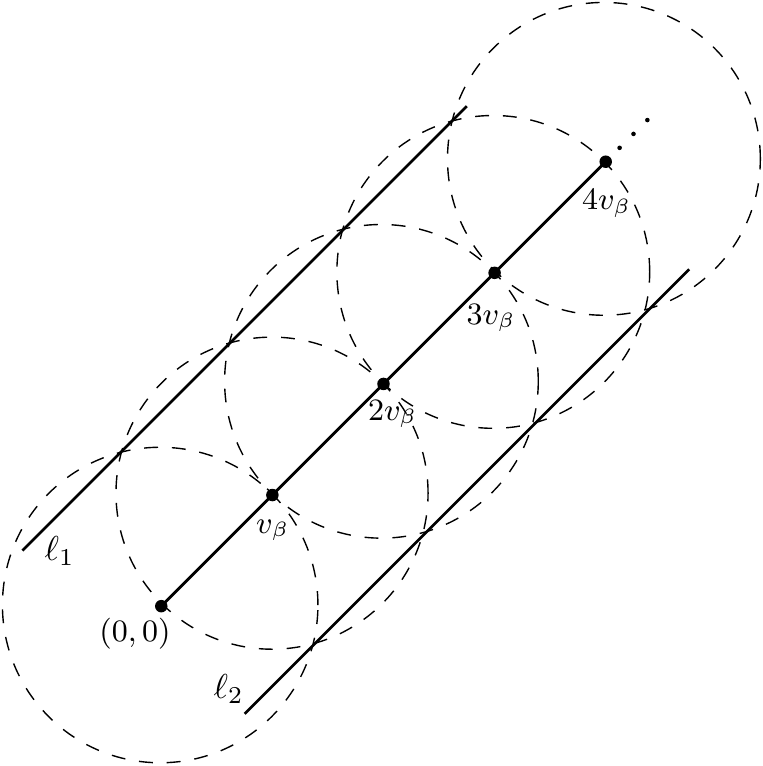}
    \caption{}
    \label{circlepicture}
\end{figure}

Note that there must be some $k \in \Z^+$ such that the point $p_k = mv_\alpha + kv$ falls between $\ell_1, \ell_2$, and in doing so, we have for some $n$, $|p_k - nv_\beta| < \sqrt{r_1}$.  By Lemma \ref{10lemma}, there exists a path in $G_1$, beginning at $p_k$ and terminating at $nv_\beta$, of length less than $8r^\frac32$.  Letting $z = k + 8r^\frac32$, in $G_1$ we therefore have $f_z(mv_\alpha, nv_\beta) > 0$.  However, for $m$ taken sufficiently large, $f_z(mw_\alpha, nw_\beta) = 0$, a contradiction that completes the proof of the theorem.\qed
\end{proof}  

Theorem \ref{z2result}, along with the previously described work of \cite{isomorphism}, give us the following main result.

\begin{theorem} \label{mainresult} Let distinct $d_1, d_2$ be distances realized between points of the integer lattice $\Z^2$.  Then the graphs $G(\Z^2, d_1)$ and $G(\Z^2, d_2)$ are not isomorphic. 
\end{theorem}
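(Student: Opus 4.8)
The plan is to prove Theorem \ref{mainresult} by reducing the general isomorphism problem to the special case already settled in Theorem \ref{z2result}, exploiting the structural reduction sketched in the paragraph following Lemma \ref{twosquaresrepslemma}. Recall that any non-trivial distance $d = \sqrt{r}$ corresponds to an integer $r$ representable as a sum of two squares, so by Lemma \ref{twosquareslemma} we may write $r = 2^\gamma \, p_1^{\alpha_1} \cdots p_m^{\alpha_m} \, q_1^{2\beta_1} \cdots q_n^{2\beta_n}$, where the $p_i \in \mathcal{P}$ (primes $\equiv 1 \pmod 4$) and the $q_j \equiv 3 \pmod 4$. Setting $r^* = p_1^{\alpha_1} \cdots p_m^{\alpha_m}$ to be the ``reduced core'' consisting solely of elements of $\mathcal{P}$, the observation cited from \cite{isomorphism} tells us that each connected component of $G(\Z^2, \sqrt{r})$ is isomorphic to the full graph $G(\Z^2, \sqrt{r^*})$. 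Thus every non-trivial distance graph is determined, up to the structure of its components, by its reduced core together with its component count.

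The key steps would run as follows. First I would formalize the component-decomposition: for each $d = \sqrt{r}$, record the pair consisting of the reduced core $r^*$ and the number of components $k(\Z^2, \sqrt{r})$, the latter being known for all $r$ from the analysis in \cite{isomorphism}. Second, given distinct distances $d_1 = \sqrt{r_1}$ and $d_2 = \sqrt{r_2}$ with reduced cores $r_1^*, r_2^*$, I would split into two cases. If $r_1^* \neq r_2^*$, then the components of $G(\Z^2, d_1)$ and $G(\Z^2, d_2)$ are respectively isomorphic to $G(\Z^2, \sqrt{r_1^*})$ and $G(\Z^2, \sqrt{r_2^*})$, which are non-isomorphic by Theorem \ref{z2result}; since isomorphic graphs must have isomorphic components, it follows that $G(\Z^2, d_1) \not\simeq G(\Z^2, d_2)$. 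If instead $r_1^* = r_2^*$ but $r_1 \neq r_2$, then the two graphs share a common component type but differ in their total multiplier $2^\gamma q_1^{2\beta_1} \cdots q_n^{2\beta_n}$; here I would invoke the component-count invariant $k(\Z^2, d)$, arguing that distinct multipliers force $k(\Z^2, d_1) \neq k(\Z^2, d_2)$, so the graphs again fail to be isomorphic since the number of components is a graph invariant.

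The main obstacle I anticipate lies in the second case. Theorem \ref{z2result} only distinguishes graphs with differing reduced cores, so when $r_1^* = r_2^*$ the entire burden falls on showing that the component count cleanly separates the two graphs. This requires the explicit formula for $k(\Z^2, \sqrt{r})$ from \cite{isomorphism} and a verification that $k$ is strictly monotone (or at least injective) as a function of the multiplier $2^\gamma q_1^{2\beta_1} \cdots q_n^{2\beta_n}$ once the core is fixed. One must be careful that the count of components genuinely grows with the multiplier and that no accidental coincidence $k(\Z^2, d_1) = k(\Z^2, d_2)$ can occur; intuitively, multiplying $r$ by a perfect-square factor $h^2$ scales the lattice spacing of each congruence class and thereby multiplies the number of components, so the map from multiplier to component count should be injective. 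Assembling these two cases then yields the full result: no two distinct realized distances produce isomorphic distance graphs on $\Z^2$, completing the negative answer to Question \ref{z2question}.
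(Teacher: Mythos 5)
Your proposal is correct and follows essentially the same route as the paper: Theorem \ref{mainresult} is obtained there by combining Theorem \ref{z2result} with precisely this core-plus-multiplier reduction (components of $G(\Z^2,\sqrt{r})$ all isomorphic to $G(\Z^2,\sqrt{r^*})$, plus the component count $k(\Z^2, d)$), which the paper cites from \cite{isomorphism} rather than re-deriving. The obstacle you flag in the equal-core case is already settled by the component count determined in \cite{isomorphism}: one gets $k(\Z^2,\sqrt{r}) = 2^\gamma q_1^{2\beta_1}\cdots q_n^{2\beta_n}$, i.e.\ $k$ equals the multiplier $r/r^*$ itself, so injectivity holds exactly as you conjectured.
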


\section{Further Work}

Along with Question \ref{z2question}, the following was posed in \cite{isomorphism}.

\begin{question} \label{subgraphquestion} Let $G_1 = G(\Z^2, d_1)$ and $G_2 = G(\Z^2, d_2)$ where $G_1 \not \simeq G_2$.  If possible, construct a finite graph $H$ which appears as a subgraph of exactly one of $G_1, G_2$.
\end{question}
   
Of course, the existence of such $H$ would guarantee $G_1 \not \simeq G_2$ without the a priori knowledge of Theorem \ref{mainresult} that the two graphs are not isomorphic.  Note also the the qualifier ``If possible" is necessary, as for some instances of $G_1, G_2$, no such $H$ exists.  As touched upon in the previous section, consider $G_1 = G(\Z^2, 1)$ and $G_2 = G(\Z^2, \sqrt{2})$.  Graph $G_1$ is connected, while $G_2$ is a disconnected graph having two components, each of which is isomorphic to $G_1$.  Even without access to Theorem \ref{mainresult}, we then have that $G_1 \not \simeq G_2$, but here, the construction of the desired $H$ is not possible.  

Unfortunately, the line of proof given in the previous section, when taken at face value, does not offer much toward a resolution of Question \ref{subgraphquestion}.  Really, the only graphs that are explicitly noted in the proof as being subgraphs of $G(\Z^2, \sqrt{r_1})$ are paths and even cycles.  Certainly, these are subgraphs of any $G(\Z^2, \sqrt{r_2})$ as well.  The characteristics of $G(\Z^2, \sqrt{r_1})$ that were utilized in the proof of Theorem \ref{mainresult} were more geometric in nature, as opposed to being graph or number theoretic.  However, perhaps in some future investigation, the method used in Section 2 can be modified to elaborate upon the underlying structural properties of $G(\Z^2, d)$ in a more concrete manner, and hopefully shed some light on Question \ref{subgraphquestion}.


\end{document}